\newtheorem{Theorem}{Theorem}[section]
\newtheorem{Proposition}[Theorem]{Proposition}
\newtheorem{Lemma}[Theorem]{Lemma}
\theoremstyle{definition}
\theoremstyle{remark}
\numberwithin{equation}{section}
\newcommand{\R}{{\mathbb R}}
\newcommand{\C}{{\mathbb C}}
\newcommand{\SL}{{\textrm{\rm SL}}}
\newcommand{\SO}{{\textrm{\rm SO}}}
\newcommand{\s}[2]{{\langle #1 , #2 \rangle}}
\newcommand{\tr}{{\textrm{\rm tr}\:}}
\renewcommand{\Im}{{\textrm{\rm Im}\:}}
\begin{document}

\title[Toda maps]{Polynomial Toda maps are transfer matrices}

\author{Christian Remling}

\address{Department of Mathematics\\
University of Oklahoma\\
Norman, OK 73019}
\email{christian.remling@ou.edu}
\urladdr{www.math.ou.edu/$\sim$cremling}

\date{March 1, 2025}

\thanks{2020 {\it Mathematics Subject Classification.} Primary 34B20 34L40 81Q10 Seconday 37K10}

\keywords{Toda map, canonical system, transfer matrix}

\begin{abstract}
We consider entire matrix functions $A(z)$ taking values in $\SL (2,\C)$. These map pairs of Herglotz functions by acting pointwise
as linear fractional transformations. The main examples of such \textit{Toda maps }are provided by transfer matrices of
differential and difference operators and by the cocycles associated with the classical integrable systems (Toda, KdV, etc.)
on these operators. Here we consider polynomial matrix functions $A(z)$. We describe these in terms of a factorization, and we then
prove that if $A$ induces a Toda map, then $A$ is essentially a transfer matrix.
\end{abstract}
\maketitle
\section{Toda maps and transfer matrices}
A \textit{Herglotz function }is a holomorphic map $F:\C^+\to\overline{\C^+}$. Here $\C^+=\{ z\in\C: \Im z>0\}$ is the upper half plane,
and the closure $\overline{\C^+}=\C^+\cup\R_{\infty}$, $\R_{\infty}=\R\cup \{\infty\}$, is taken in the Riemann sphere $\C_{\infty}$. We denote the set of Herglotz
functions by $\mathcal F$.

We are interested in matrix functions $A(z)$ from the group
\[
\mathcal{SL} = \{ A: \C\to\SL (2,\C) : A \textrm{ entire, } A(x)\in\SL (2,\R)\textrm{ for }x\in\R \} .
\]
The reason for this interest lies in the fact that such an $A\in\mathcal{SL}$ may induce a transformation
$(F_+,F_-)\mapsto (G_+,G_-)$ between pairs of Herglotz functions by acting pointwise as a linear
fractional transformation, as follows:
\begin{equation}
\label{TM}
G_{\pm}(z) =\pm \left( A(z)\cdot [\pm F_{\pm}(z)]\right) , \quad\quad z\in\C^+ .
\end{equation}
Here the dot notation refers to the natural action of $\SL(2,\C)$ on $\C_{\infty}$, which is given by
\[
\begin{pmatrix} a & b \\ c& d\end{pmatrix} \cdot w = \frac{aw+b}{cw+d} .
\]
Alternatively, we can view $\C_{\infty}\cong \C\mathbb P^1$ as projective space and thus identify vectors $v=(v_1,v_2)\in\C^2$, $v\not= 0$, with points
$z=v_1/v_2\in\C_{\infty}$, and then a $B\in\SL (2,\C)$ simply acts on $v$ as a matrix in the natural way.

Given an $A\in\mathcal{SL}$ and $F_{\pm}\in\mathcal F$, the action \eqref{TM} will always define two new holomorphic functions $G_{\pm}:\C^+\to\C_{\infty}$,
but of course there is no guarantee that $G_{\pm}$ will be Herglotz functions again.
We thus introduce the \textit{domain }of an $A\in\mathcal{SL}$ as
\[
D(A) = \{ (F_+,F_-)\in\mathcal F^2 : \pm (A\cdot (\pm F_{\pm}))\in\mathcal F \} ,
\]
and we call the correspondence $(F_+,F_-)\mapsto (G_+,G_-)$, $(F_+,F_-)\in D(A)$, a \textit{Toda map.}
This notion was introduced and advertised in \cite{Remgen,Rembook,RemToda}.
We must keep our expectations on how large $D(A)$ can be
reasonably low here since Toda maps are rather special transformations:
if we view them alternatively as maps of canonical systems, as will be discussed in a moment, then the transformed system is
unitarily equivalent to the original one, and the absolute values of the (generalized) reflection coefficients are preserved \cite[Theorems 7.2, 7.7]{Rembook}.
So the supply of $(G_+,G_-)\in\mathcal F^2$ that could conceivably be reached from a given pair $(F_+,F_-)$ by a Toda map is rather small from the outset.

We will also see below that $D(A)=\emptyset$ for many $A\in\mathcal{SL}$. At the other end of the spectrum,
the only $A\in\mathcal{SL}$ with $D(A)=\mathcal F^2$ are the constant functions $A(z)=B\in\SL (2,\R)$; then $B$ acts as an automorphism of $\C^+$.

Pairs of Herglotz functions are in one-to-one correspondence with \textit{canonical systems. }These are differential equations of the form
\begin{equation}
\label{can}
Jy'(x) = -zH(x) y(x), \quad J=\begin{pmatrix} 0 & -1 \\ 1 & 0 \end{pmatrix}, \quad x\in\R ,
\end{equation}
with Borel measurable coefficient functions $H(x)\in\R^{2\times 2}$, $H(x)\ge 0$, $\tr H(x)=1$.
It is in this context that the transformations \eqref{TM} occur naturally, in at least two ways.

First of all, the \textit{transfer matrices }$T(x,a;z)$
are in $\mathcal{SL}$. These are defined as the matrix solution of \eqref{can} with the initial value $T(a,a;z)=1$.
In this paper, it will always be understood that $x\ge a$ when discussing transfer matrices. This cases suffices since
$T(a,x)=T(x,a)^{-1}$. Also, if $a=0$, then we usually write the transfer matrix as simply $T(x,z)$.

The \textit{Titchmarsh-Weyl $m$ functions }of \eqref{can} may be defined as
\begin{equation}
\label{defm}
m_{\pm}(z) = \pm f_{\pm}(0,z) , \quad z\in\C^+ ,
\end{equation}
if we again use the convention of identifying a vector $v\not=0$ with the point $v_1/v_2\in\C_{\infty}$ on the Riemann sphere.
Here $f_{\pm}$ denotes the unique, up to a constant factor, solution of \eqref{can} that is in $L^2_H$ on $\pm x>0$. In other words,
\[
\int_0^{\infty} f^*_+(x,z) H(x) f_+(x,z)\, dx <\infty ,
\]
and similarly for $f_-$. We have $m_{\pm}\in\mathcal F$ and, conversely, for any given $F\in\mathcal F$, there is a unique coefficient
function $H(x)$ on $x>0$ such that $m_+(z;H)=F(z)$. Of course, the same fundamental result holds for $m_-$ and left half lines. See \cite[Theorem 5.1]{Rembook}.

Moreover, it is clear from the definitions of $m_{\pm}$
and the transfer matrix that if we replace $H(x)$ by its shifted version $K(x)=H(x+L)$, then the transformation from the original $m$ functions
$m_{\pm}=m_{\pm}(z;H)$ to the new ones $M_{\pm}=m_{\pm}(z;K)$ is obtained by letting $T=T(L,z)$ act as in \eqref{TM}: $M_{\pm}=\pm (T\cdot (\pm m_{\pm}))$.

More sophisticated examples of Toda maps (which also explain the terminology) are obtained from the classical integrable systems on difference and differential
operators such as the flows from the Toda hierarchy (on Jacobi matrices) or the KdV hierarchy (on Schr{\"o}dinger operators). The subject is discussed
in \cite{OngRem,RemToda} in some detail from precisely this point of view. Please also consult \cite{GesHol,Kot,Teschl} for more background information.

It is this connection that provided the original motivation for the present work. There is some
evidence \cite{HurOng,RemToda} that the usual constructions of these hierarchies run into considerable obstacles in the more general framework of canonical systems.
The question of whether (and how) such hierarchies could be constructed seems quite fundamental but, to my knowledge, has received little attention so far beyond the
attempts in \cite{HurOng,RemToda}.

If one subscribes to the point of view advertised in \cite{OngRem,RemToda},
then the key feature of these systems is the associated cocycle of matrix functions $A(z)\in\mathcal{SL}$
that may be used to implement the dynamics by letting them act as Toda maps. It is now natural to adopt a more abstract approach and inquire about Toda maps in general.
This paper presents the attempt to get this program started by looking at what must be the most basic case, namely that of matrix functions $A(z)$ with polynomial
dependence on $z$.

We will obtain rather complete answers in this case, and perhaps this can be the first step of a larger project.
We will prove that there are no new examples of such polynomial Toda maps beyond the obvious ones of
transfer matrices with polynomial dependence on $z$ (see Theorem \ref{T1.2} below), even though there is a large supply of polynomial matrix functions $A(z)$
that are not transfer matrices (see Theorem \ref{T1.1} for this).
However, these will turn out to have empty domains $D(A)$.

In general, an $A(z)\in\mathcal{SL}$ is a transfer matrix if and only if it satisfies the additional conditions $A(0)=1$ and
\begin{equation}
\label{hp}
i( A^*(z)JA(z)-J)\ge 0 \textrm{ for all }z\in\C^+ .
\end{equation}
This second condition \eqref{hp} is equivalent to $w\mapsto A^{-1}(z)\cdot w$ being a Herglotz function for all (fixed) $z\in\C^+$
\cite[Lemma 3.9]{Rembook}. It is fairly straightforward
to verify that a transfer matrix $A(z)=T(L,z)$ satisfies these extra conditions; the converse is another major result from the inverse spectral theory of
canonical systems. Please see \cite[Section 4.4, Theorem 5.2]{Rembook} for further discussion.

Condition \eqref{hp} also is the key ingredient to the results on reflectionless limit points \cite{RemAnn}; in the more general context of integrable flows and Toda maps,
these issues have recently been studied in depth by Kotani \cite{Kot}.

We introduce the notation $\mathcal{TM}$ for this subclass of $\mathcal{SL}$, so we define
\[
\mathcal{TM} = \{ A\in\mathcal{SL}: A(0)=1, A \textrm{ satisfies }\eqref{hp} \} .
\]
As announced, we restrict our attention here to polynomial matrix functions
\[
\mathcal P = \{ A\in\mathcal{SL}: A(z) = 1+zA_1+ \ldots + z^n A_n \} .
\]
I have kept the normalization $A(0)=1$, but this is only for convenience and not essential for what follows. A general polynomial $A(z)\in\mathcal{SL}$
can be written as $A(z)=A(0)(A(0)^{-1}A(z))=(A(z)A(0)^{-1})A(0)$, so differs from an element of $\mathcal P$ only by the constant matrix
$A(0)\in\SL (2,\R)$. This acts as an automorphism of $\C^+$, so in particular will not affect
the basic question of whether the domain $D(A)$ of the associated Toda map is non-empty.

We can describe the class of polynomial transfer matrices $T\in\mathcal P\cap\mathcal{TM}$ very explicitly. The general such $T$ is given by
\begin{equation}
\label{1.1}
T(z) = (1+L_1zJP_1)\cdots (1+L_N zJP_N) ,
\end{equation}
and here $L_j>0$, and each $P_j=P_{\alpha_j}$ is a projection, onto some $e_{\alpha_j}$, with $e_{\alpha}=(\cos\alpha,\sin\alpha)^t$;
compare \cite[Lemma 5.9]{Rembook}. So
\[
P_{\alpha} = e_{\alpha}e^*_{\alpha} = \begin{pmatrix} \cos^2\alpha & \sin\alpha\cos\alpha \\ \sin\alpha\cos\alpha & \sin^2\alpha \end{pmatrix} .
\]
In the sequel, by a \textit{projection }$P$ we will always mean a matrix of this form.

We can be more explicit still. The matrix function $S(z)=1+LzJP$ is the transfer matrix $S(z)=T(L,z)$ of the coefficient function $H(x)=P$ on $0\le x\le L$;
such an interval on which $H$ is a constant projection is called a \textit{singular interval. }See also \cite[Section 1.2]{Rembook}.
So the polynomial transfer matrices are obtained by solving \eqref{can} across a finite number of singular intervals.

We will prove the following version of \eqref{1.1} for general $A\in\mathcal P$.
\begin{Theorem}
\label{T1.1}
Let $A\in\mathcal P$. Then there are non-zero polynomials $p_j(z)$, $p_j(0)=0$, with real coefficients and projections $P_j$ satisfying $P_j\not= P_{j+1}$ such that
\begin{equation}
\label{1.7}
A(z) = (1+p_1(z)JP_1) \cdots (1+p_N(z)JP_N) .
\end{equation}
This factorization is unique. Conversely, any such product defines an $A\in\mathcal P$.
\end{Theorem}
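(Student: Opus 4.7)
The converse direction is routine and I will dispose of it first: each factor $1 + p_j(z) J P_{\alpha_j}$ is a polynomial with value $1$ at $z = 0$ (since $p_j(0) = 0$), and has determinant identically $1$ because $J P_\alpha = e_{\alpha + \pi/2} e_\alpha^*$ is a rank-one matrix satisfying $(JP_\alpha)^2 = 0$ and $\tr JP_\alpha = 0$. Products of such factors remain in $\mathcal{SL}$ and have real coefficients, so they lie in $\mathcal P$.

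For the forward direction, my plan is to prove existence by induction on $n = \deg A$. Writing $A(z) = 1 + z A_1 + \cdots + z^n A_n$ with $A_n \ne 0$, the coefficient of $z^{2n}$ in $\det A(z) \equiv 1$ is $\det A_n$, so $A_n$ has rank exactly $1$. I will write $\ker A_n = \R e_{\alpha + \pi/2}$ and put $P_N := P_\alpha$; then $A(z) e_{\alpha + \pi/2}$ is a vector polynomial of some degree $n' \le n - 1$ with leading vector $u$. The key observation is that $\det A(z) \equiv 1$ is the same as $\det\bigl(A(z) e_\alpha \mid A(z) e_{\alpha + \pi/2}\bigr) \equiv 1$ (the matrix with these columns is $A(z)$ times a rotation), and comparing the coefficient of $z^{n + n'}$ forces the leading vector $A_n e_\alpha$ of $A(z) e_\alpha$ to be parallel to $u$. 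Choosing $c \in \R$ with $A_n e_\alpha = c u$ and setting $p(z) := c z^{n - n'}$, I verify directly that $B(z) := A(z)(1 - p(z) J P_N)$ lies in $\mathcal P$ with $\deg B \le n - 1$. The induction hypothesis gives a factorization of $B$; appending $1 + p(z) J P_N$ yields one for $A$. If the last factor of $B$'s factorization happens to also use the projection $P_N$, I merge the two (or drop the result if it collapses to $1$) so that the adjacency condition $P_j \ne P_{j+1}$ is preserved.

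For uniqueness I will again induct on $n$. Suppose $A$ has two admissible factorizations whose last factors are $1 + pJP$ and $1 + qJQ$; then both $P$ and $Q$ equal the projection determined by the direction of $\ker A_n$, so $P = Q$. A short direct computation from the factored form, using the adjacency condition, shows that $\deg A(z) e_{\alpha + \pi/2}$ equals $n$ minus the degree of the last polynomial factor in any admissible factorization; hence $\deg p = \deg q =: d$. Writing $B_1 := A(1 - pJP)$ and $B_2 := A(1 - qJP)$, I compute
\[
B_1 - B_2 = (q - p)\, A(z) J P .
\]
Now $A(z) J P = \bigl(A(z) e_{\alpha + \pi/2}\bigr) e_\alpha^*$ has degree exactly $n - d$, while the left side has degree at most $n - d$. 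Therefore $q - p$ has degree at most $0$, and combined with $p(0) = q(0) = 0$ this forces $p = q$. Hence $B_1 = B_2$, and the induction hypothesis finishes the proof.

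The main technical point I anticipate is the parallelism argument via $\det A \equiv 1$ in the existence step; the adjacency bookkeeping (both when merging factors in existence and when comparing degrees in uniqueness) is the other place where care is needed, but both reduce to short direct computations.
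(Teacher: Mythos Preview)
Your argument is correct and follows the same overall inductive strategy as the paper (peel off one factor governed by the leading coefficient, then recurse), but the execution is different in a few useful ways. For existence, the paper conjugates by an $S\in\SL(2,\R)$ to put $A_n$ in Jordan normal form, splits into the diagonalizable and nilpotent cases, and peels a monomial factor off on the \emph{left}; you instead work in an orthonormal basis $\{e_\alpha,e_{\alpha+\pi/2}\}$ aligned with $\ker A_n$ and peel off on the \emph{right}, obtaining the parallelism $A_n e_\alpha\parallel u$ directly from the vanishing of the $z^{n+n'}$ coefficient of $\det(A(z)e_\alpha\mid A(z)e_{\alpha+\pi/2})$. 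This sidesteps the Jordan form case split entirely and is a bit more elementary. For uniqueness, the paper isolates the key combinatorial fact as a separate lemma (that $JP_1\cdots JP_N\neq 0$ precisely when $P_j\neq P_{j+1}$), reads off $P_N$ from $\ker A_n$, and then argues by contradiction that $q_M\neq p_N$ would force $P_{N-1}=P_N$; you instead extract the degree of the last polynomial factor from $\deg A(z)e_{\alpha+\pi/2}$ and finish with the clean identity $B_1-B_2=(q-p)\,A(z)JP$. Your ``short direct computation from the factored form, using the adjacency condition'' is exactly the content of that lemma, and it is worth spelling out: you need it to know both that the degree of any admissible product is the full sum $\sum\deg p_j$ (so that $\ker A_n=N(P_N)$ and hence $P=Q$) and that $B e_{\alpha+\pi/2}$ has degree exactly $n-\deg p$, not merely at most. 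Once that is made explicit, your proof is complete and arguably a little cleaner than the paper's.
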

This clarifies the relation between $\mathcal P$ and the smaller subclass $\mathcal P\cap\mathcal{TM}$. The factorization \eqref{1.7} is more general
than \eqref{1.1} in two ways: the polynomials $p_j$ can be of degree larger than one, and even if $\deg p_j=1$, so $p_j(z)=Lz$,
then we can have $L<0$ so that then the corresponding factor $1+LzJP$ is not a transfer matrix across a singular interval but rather the inverse of such a matrix.
So already in this special case $\deg p_j=1$, we are dealing with a considerably larger class of matrix functions, which can now be built from the basic
transfer matrices $S(z)=1+LzJP$ \textit{and their inverses }in arbitrary succession.

For transfer matrices $A(z)=T(L,z)$, there is an easy complete description of $D(A)$. We have $(F_+,F_-)\in D(A)$ if and only if
$F_+(z)$ lies in the \textit{Weyl disk }$A^{-1}(z)\overline{\C^+}$ for all $z\in\C^+$, and $F_-$ can be an arbitrary Herglotz function. Equivalently,
$F_+$ must be the half line $m$ function of a canonical system $K(x)$, $x\ge 0$, that agrees with $H(x)$ on $0\le x\le L$ if $A(z)=T(L,z;H)$ was the transfer
matrix of $H$ across $[0,L]$. Please see \cite[Theorem 6.1]{Rembook} for a discussion of these facts.

In particular, we see that always $D(T)\not=\emptyset$ for a transfer matrix $A(z)=T(z)$ (whether a polynomial of $z$ or not), and in fact these
domains are rather large.

Observe also that it is clear that there is no condition on $F_-$. We can implement multiplication by $-1$ by letting the matrix
\[
I = \begin{pmatrix} 1 & 0 \\ 0 & -1 \end{pmatrix}
\]
act, so $-(A\cdot (-F))=(IAI)\cdot F$, and now \eqref{hp} also implies that $w\mapsto IA(z)I\cdot w$ is a Herglotz function for $z\in\C^+$.
Compare \cite[Lemma 4.14]{Rembook}.

The main result of this paper was stated in its title. It essentially says that if $A\in\mathcal P$ and $D(A)\not=\emptyset$, then $A$ is a transfer matrix or the inverse
of a transfer matrix, so the $T\in\mathcal P\cap\mathcal{TM}$ already give us all the polynomial Toda maps.
However, the statement is not true in literally this form due to the presence of the trivial example
\begin{equation}
\label{1.2}
A(z)= 1+p(z)JP .
\end{equation}
For any polynomial $p$, if $F_{\pm}(z)\equiv \pm x\in\R_{\infty}$, with $x\in\R_{\infty}$ denoting the extended real number represented by the
unit vector $v\in N(P)$, then trivially $A\cdot F_+=F_+$, $IAI\cdot F_-=F_-$.
So this is the identity transformation, and we will see later, in Lemma \ref{L3.2} below, that $D(A)=\{ (x,-x)\}$ if $\deg p\ge 2$, so this $A$ can not be applied
to anything else and is thus
completely uninteresting as a Toda map. Moreover, this pair of constant real $m$ functions $(x,-x)$ corresponds to the canonical system with coefficient function
$H(x)\equiv P$, which is also trivial from a spectral theoretic point of view. On the other hand, if $\deg p=1$ in \eqref{1.2}, then $A$ or $A^{-1}$ is a transfer
matrix across a single singular interval.
\begin{Theorem}
\label{T1.2}
Let $A\in\mathcal P$, and suppose that $D(A)\not=\emptyset$ and $A$ is not of the type \eqref{1.2} with $\deg p\ge 2$.
Then $A\in\mathcal{TM}$ or $A^{-1}\in\mathcal{TM}$.
\end{Theorem}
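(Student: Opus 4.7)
The plan is to combine the factorization from Theorem \ref{T1.1} with an asymptotic analysis along $z=iy$, $y\to\infty$, to show that the hypothesis $D(A)\not=\emptyset$ forces every $p_j$ in the factorization $A(z)=\prod_{j=1}^N(1+p_j(z)JP_j)$ to be linear with leading coefficients of a common sign. Because $(JP_j)^2=0$, each factor inverts as $(1+p_jJP_j)^{-1}=1-p_jJP_j$, so $A^{-1}=\prod_{j=N}^{1}(1-p_j(z)JP_j)$. Comparing with the explicit normal form \eqref{1.1} of elements of $\mathcal{P}\cap\mathcal{TM}$, $A\in\mathcal{TM}$ corresponds exactly to $p_j(z)=L_jz$ with all $L_j>0$, while $A^{-1}\in\mathcal{TM}$ corresponds to $p_j(z)=L_jz$ with all $L_j<0$; so the desired conclusion is precisely the sign-coherent linear case of the factorization.

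The single factor case $N=1$ is immediate: $\deg p\ge 2$ is excluded by hypothesis, and $\deg p=1$ yields $A$ or $A^{-1}$ as a transfer matrix across a singular interval. For $N\ge 2$, I fix $(F_+,F_-)\in D(A)$ and study $G_+(z):=A(z)\cdot F_+(z)$ along $z=iy$ as $y\to\infty$. Any Herglotz $F_+$ has $F_+(iy)=iby+O(1)$ with $b\ge 0$. Each factor $B_j(iy)$ is a parabolic M\"obius transformation with unique fixed point at the projective coordinate $x_j:=-\tan\alpha_j$ of $N(P_j)$, and its entries have size $|p_j(iy)|\sim|c_j|y^{n_j}$, where $c_j,n_j$ are the leading coefficient and degree of $p_j$. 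When $|p_j(iy)|\to\infty$, the action $B_j(iy)\cdot$ is a strong contraction towards $x_j$ except at $x_j$ itself, and the constraint $P_j\not=P_{j+1}$ guarantees that consecutive fixed points differ. Iterating across $B_N,\dots,B_1$ yields $G_+(iy)\to x_1$ as $y\to\infty$, with a power series in $1/y$ to which each factor contributes one term.

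From this expansion one reads off $\Im G_+(iy)$ as a sum, the $j$-th summand proportional to $c_j\,y^{-n_j}$ times a positive $y$-independent prefactor determined by the $\alpha_k$ and by $b$. The Herglotz requirement $\Im G_+(iy)\ge 0$ for large $y$ is dominated by the factors of smallest $n_j$ and forces the corresponding $c_j$ to be $\ge 0$. The mirror analysis for $F_-$ through $IAI$, whose factorization replaces each $P_{\alpha_j}$ by $P_{-\alpha_j}$ and each $p_j$ by $-p_j$, gives the complementary inequality, so together they rule out mixed signs. To exclude $n_j\ge 2$ entirely, the idea is that a higher-degree factor can only contribute compatibly with $\Im G_+(iy)\ge 0$ when the input it receives already sits at its fixed point; by the chain rigidity $P_j\not=P_{j\pm 1}$ this cascades backward to force $F_+$ to be identically the terminal fixed-point function, i.e., into the excluded configuration \eqref{1.2}.

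The main obstacle is handling the iterated collapse when an intermediate value $B_{k+1}\cdots B_N\cdot F_+(iy)$ happens to approach the fixed point of $B_k$, so that the naive contraction estimate degenerates and the expansion above needs to be reorganized. I expect to manage this by induction, using the single-factor rigidity from Lemma \ref{L3.2} to confine any such degeneracy to the trivial configuration already excluded. Tracking the signs carefully enough through the subleading terms, when several factors can give contributions of comparable order, is likely the most delicate piece of the argument.
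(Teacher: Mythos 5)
Your overall strategy---study $G_\pm(iy)$ as $y\to\infty$ and use the growth constraints on Herglotz functions to constrain the factorization of Theorem~\ref{T1.1}---is in the right spirit, and the framing that ``$A\in\mathcal{TM}$ iff all $p_j(z)=L_jz$ with $L_j>0$'' is correct. But the central step, the claimed structure of the asymptotic expansion, is not established and does not hold in the form you state. Composing the parabolic factors $B_N,\dots,B_1$ does not produce an expansion of $\Im G_+(iy)$ as a sum with the $j$-th summand $\sim c_j\,y^{-n_j}$ times a positive, $y$-independent prefactor: already for two factors the leading behaviour of $\Im G_+(iy)$ is governed by a single dominant term whose coefficient involves \emph{products} of the leading coefficients $c_k$ for several $k$, not a clean sum. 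So reading off sign conditions factor by factor is not justified, and the passage from ``$\Im G_+(iy)\ge 0$'' to ``each $c_j\ge 0$'' has no proof. You in fact flag this yourself (``tracking the signs carefully enough through the subleading terms\ldots is likely the most delicate piece''), and that is exactly where the argument breaks.

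There is a second, related gap. Having $D(A)\ne\emptyset$ does not merely impose sign conditions on the $c_j$; it forces a \emph{quantitative} match between the rightmost factor of the factorization and the asymptotics of $F_+$. In the paper's argument (Lemma~\ref{L3.3}), after reducing to Jordan form one finds that the off-diagonal degrees must both be $n-1$, that $a/c<0$ and $a/b>0$ (with appropriate choice of $A$ vs.\ $IAI$), and---crucially---that $K=S\cdot F_+$ must behave like $-\gamma/z+o(1/z)$ with $\gamma$ \emph{equal} to $b/a$. This identifies $F_+$ as $T_2^{-1}\cdot K_+$ for the specific $T_2\in\mathcal T_1$ that can be peeled off on the right, which is the mechanism that makes the induction work. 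The proof then removes one $\mathcal T_1$ factor from each side, dropping the degree by 2, and handles the degenerate endgame (a constant $F_\pm$ appearing) via Lemmas~\ref{L3.1} and~\ref{L3.2}. Your sketch replaces this two-sided, one-factor-at-a-time peeling and exact matching with a global sign bookkeeping across all factors simultaneously; the ``iterated collapse'' you worry about is not an edge case but the generic situation, and there is no induction hypothesis in place to absorb it. To repair the proposal you would essentially need to rediscover the local peeling step of Lemma~\ref{L3.3}, including the exact identification of $\gamma$ and the two-sided factorization.
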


So, to summarize the whole plot, while there are many polynomial matrix functions $A\in\mathcal P$ that are not transfer matrices, these can not be applied
to anything, so do not induce Toda maps and thus the transfer matrices suffice if we are interested in the maps. I should perhaps also mention one more time that the
situation is completely different once the assumption of polynomial dependence on $z$ is dropped because then the classical flows do provide examples of Toda maps
that are not induced by transfer matrices.

Sections 2 and 3 will present the proofs of Theorems \ref{T1.1} and \ref{T1.2}, respectively.
\section{Proof of Theorem \ref{T1.1}}
The matrix $J$ acts as a rotation by 90 degrees, so $PJP=0$ for any projection $P$. As a consequence, $e^{pJP}=1+pJP$, and since $\tr JP=0$, this shows that
$\det (1+pJP)=1$. It is now clear that the product from Theorem \ref{T1.1} defines a matrix function from $\mathcal P$.

Conversely, suppose that an $A\in\mathcal P$ is given, and to avoid trivialities, assume also that $n=\deg A\ge 1$. If we expand $A(z)=1+\ldots +z^n A_n$,
then $\det A_n=0$. We bring $A_n\in\R^{2\times 2}$ to Jordan normal form $SA_nS^{-1}$, which we can do by changing bases by
an $S\in\SL( 2,\R)$ since $A_n$ has the real eigenvalues $0$, $\tr A_n$.
Let's first deal with the case when $A_n$ is diagonalizable. Consider the transformed $B\in\mathcal P$, $B(z)=SA(z)S^{-1}$.
We can assume that
\begin{equation}
\label{2.1}
B(z) = \begin{pmatrix} az^n & bz^s \\ cz^t & dz^k \end{pmatrix} + \textrm{ lower order terms} .
\end{equation}
More precisely, what we mean by this is that the first matrix displays the highest order term in each entry separately; it is of course quite likely
that the next term in for example the $(1,1)$ element is $ez^{n-1}$, which is not of lower order than $dz^k$, say.
We have $a,b,c,d\not= 0$, $s,t,k\le n-1$, and also $n+k=s+t$ and $ad=bc$, or the highest order term in the expansion of $\det B(z)$ would not equal zero.
Hence the degree of
\[
C(z)=\begin{pmatrix} 1 &  -\frac{a}{c}z^{n-t} \\ 0 & 1 \end{pmatrix} B(z)
\]
is at most $n-1$, and we have successfully factored
\begin{equation}
\label{2.6}
B(z) = \begin{pmatrix} 1 &  \frac{a}{c}z^{n-t} \\ 0 & 1 \end{pmatrix} C(z) .
\end{equation}
Observe now that the first matrix on the right-hand side of \eqref{2.6}
equals $1-\frac{a}{c}z^{n-t}JP_2$, with $P_2=\bigl( \begin{smallmatrix} 0 & 0\\0&1\end{smallmatrix}\bigr)$
denoting the projection onto $e_2$. We then return to $A(z)=S^{-1}B(z)S$. Recall that any $S\in\SL(2,\R)$ satisfies the identity $S^{-1}J=JS^t$. Thus
\[
S^{-1}(1+p(z)JP_2)S=1+p(z)JS^tP_2S= 1+rp(z)JQ ,
\]
and here $Q$ is another projection, onto $S^te_2$, and $r>0$. This simple fact will be used frequently in the sequel, so let me state it separately.
\begin{Lemma}
\label{L2.2}
Let $S\in\SL (2,\R)$ and let $P$ be the projection onto $v\in\R^2$. Then $S^{-1}JPS =cJQ$, with $c>0$, and $Q$ is the projection onto $S^tv$.
\end{Lemma}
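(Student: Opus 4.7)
The plan is to reduce the lemma to a short linear-algebraic computation resting on a single identity, namely
\[
S^{-1}J = JS^t \qquad \text{for every } S\in\SL (2,\R) ,
\]
which is already noted in passing just before the lemma statement. I would establish this first by a direct $2\times 2$ calculation: writing out both sides using the cofactor formula for $S^{-1}$, which is available because $\det S = 1$, gives the same matrix. This is the only nontrivial ingredient.

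Next, I would write the projection in outer-product form. By the paper's convention, every projection is of the form $\hat v \hat v^t$ with $\hat v$ a unit vector, so take $\hat v = v/\|v\|$, which spans the same line as $v$. The identity above then turns the conjugation into
\[
S^{-1}JPS = J S^t \hat v \hat v^t S = J (S^t \hat v)(S^t \hat v)^t ,
\]
and the symmetric rank-one matrix on the right factors as $\|S^t \hat v\|^2 Q$, where $Q$ is the projection onto $S^t \hat v$, equivalently the projection onto $S^t v$, since these two vectors are parallel. This already yields $S^{-1}JPS = cJQ$ with $c = \|S^t \hat v\|^2$.

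Positivity of $c$ is then immediate: $S^t$ is invertible and $\hat v \neq 0$, so $S^t \hat v \neq 0$, forcing $c>0$. There is no real obstacle to the argument; the lemma is a short bookkeeping step that will be used repeatedly in the sequel, and the only thing worth spelling out with any care is the identity $S^{-1}J = JS^t$.
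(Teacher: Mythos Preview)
Your proposal is correct and matches the paper's own reasoning essentially line for line: the paper, in the paragraph immediately preceding the lemma, uses the identity $S^{-1}J=JS^t$ to rewrite $S^{-1}JPS$ as $JS^tPS$ and then observes that $S^tPS$ is a positive multiple of the projection onto $S^t v$. There is no separate proof block in the paper; the lemma is simply extracted from that computation, and your write-up makes the same steps explicit.
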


We conclude that we have split off a factor of the desired form in the original matrix function also: we have
\[
A(z) = (1+kz^m JQ) D(z) ,
\]
with $D\in\mathcal P$, $\deg D\le n-1$.

The same procedure works if $A_n$ is not diagonalizable. Since $\det A_n=0$, this matrix now has $\lambda=0$ as its only eigenvalue.
We thus obtain the following analog of \eqref{2.1}:
\[
B(z) = \begin{pmatrix} az^s & \pm z^n \\ cz^k & dz^t \end{pmatrix} + \textrm{ lower order terms} .
\]
We can then lower the degree by multiplying from the left by
\[
1  \pm \frac{1}{d}z^{n-t} JP_2 = \begin{pmatrix} 1 & \mp\frac{1}{d} z^{n-t} \\ 0& 1 \end{pmatrix}
\]
and otherwise argue as above.

Finally, we of course repeat this whole basic step with the new matrix function $D(z)$ etc.\ until the degree is zero.
It could happen here that successive factors contain the same projection but this is not a problem because $(1+pJP)(1+qJP)=1+(p+q)JP$, so if it does happen,
we simply combine these factors into a single one.

It remains to prove the uniqueness of these factorizations.
\begin{Lemma}
\label{L2.1}
Given projections $P_1,\ldots , P_N$, we have
\[
JP_1JP_2 \cdots JP_N\not=0
\]
if and only if $P_j\not= P_{j+1}$ for $j=1,2, \ldots, N-1$.
\end{Lemma}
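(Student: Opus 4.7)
The plan is to compute the product explicitly by exploiting the rank-one structure of each factor. Writing $P_j = P_{\alpha_j} = e_{\alpha_j}e_{\alpha_j}^*$ and noting that $Je_\alpha = e_{\alpha+\pi/2}$, we obtain the factorization $JP_j = e_{\alpha_j+\pi/2}\, e_{\alpha_j}^*$, so each $JP_j$ is a rank-one matrix with output direction $e_{\alpha_j+\pi/2}$ and input direction $e_{\alpha_j}$. When the product $JP_1 JP_2 \cdots JP_N$ is expanded, every internal meeting point collapses to the scalar
\[
e_{\alpha_j}^* e_{\alpha_{j+1}+\pi/2} = \sin(\alpha_j-\alpha_{j+1}),
\]
and the whole product therefore telescopes to
\[
JP_1 JP_2 \cdots JP_N = \left( \prod_{j=1}^{N-1}\sin(\alpha_j-\alpha_{j+1}) \right) e_{\alpha_1+\pi/2}\, e_{\alpha_N}^* .
\]

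Since the trailing rank-one matrix $e_{\alpha_1+\pi/2} e_{\alpha_N}^*$ is nonzero, the product vanishes precisely when some $\sin(\alpha_j-\alpha_{j+1})=0$, that is, when $\alpha_j\equiv \alpha_{j+1}\pmod{\pi}$. In view of $e_{\alpha+\pi}=-e_\alpha$ and $P_\alpha=e_\alpha e_\alpha^*$, this is exactly the condition $P_j=P_{j+1}$, which yields the claimed equivalence.

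There is no real obstacle in this argument; the only step requiring recognition is the rank-one decomposition $JP=(Je)e^*$, which converts the matrix product into a telescoping product of scalar inner products multiplied by a single outer rank-one matrix. As a quick sanity check on the easier direction, note that $P_j=P_{j+1}$ immediately gives $(JP_j)^2 = (Je_{\alpha_j}) \bigl( e_{\alpha_j}^* J e_{\alpha_j} \bigr) e_{\alpha_j}^* = 0$ since $J$ is antisymmetric, so any consecutive repetition of projections forces the whole product to vanish, even without invoking the telescoping computation.
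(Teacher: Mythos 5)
Your proof is correct but takes a genuinely different route from the paper. The paper argues by induction on $N$: assuming $JP_1\cdots JP_{N-1}\not=0$, it observes that this singular matrix has one-dimensional kernel equal to $N(P_{N-1})$, while the range of $JP_N$ is $J R(P_N) = N(P_N)$; since $P_{N-1}\not=P_N$ these subspaces differ, so appending $JP_N$ cannot kill the product. One direction is dispatched instantly via $PJP=0$. Your approach instead exploits the rank-one factorization $JP_j = e_{\alpha_j+\pi/2}\,e_{\alpha_j}^*$ to telescope the entire product into
\[
\Bigl(\prod_{j=1}^{N-1}\sin(\alpha_j-\alpha_{j+1})\Bigr)\, e_{\alpha_1+\pi/2}\,e_{\alpha_N}^*,
\]
reducing the question to whether a product of sines vanishes. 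This is a legitimate and arguably more transparent argument: it handles both directions of the equivalence in one stroke and gives a closed-form expression for the product, which is strictly more information than the paper's existential statement. The paper's induction is slightly more abstract and generalizes more readily to situations where an explicit angle parametrization is unavailable, but for $2\times 2$ real projections your computation is cleaner. Both proofs are correct; the inner-product evaluation $e_{\alpha_j}^*e_{\alpha_{j+1}+\pi/2}=\cos(\alpha_j-\alpha_{j+1}-\pi/2)=\sin(\alpha_j-\alpha_{j+1})$ and the identification $P_{\alpha}=P_{\alpha+\pi}$ are both accurate.
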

\begin{proof}
One direction is obvious because, as we observed earlier, $PJP=0$ for any projection $P$.

Conversely, suppose now that $P_j\not= P_{j+1}$.
We can argue by induction on $N$, so we can assume that $JP_1\cdots JP_{N-1}\not =0$. Since this matrix is singular, its kernel must
be one-dimensional and is thus equal to $N(P_{N-1})$. Since $J$ is rotation by 90 degrees, we have
$R(JP_N)=R(P_N)^{\perp} = N(P_N)$. By assumption, this is not the same space as $N(P_{N-1})$ and thus $(JP_1\cdots JP_{N-1})JP_N\not=0$, as claimed.
\end{proof}
Lemma \ref{L2.1} shows that if an $A\in\mathcal P$ with $\deg A=n$ is factored as stated in Theorem \ref{T1.1}, then the coefficient (matrix) $A_n$ of $z^n$ is a multiple
of $JP_1\cdots JP_N$. In particular, $P_N$ can be recovered from $A(z)$ via $A_n$ as the projection with the same null space as $A_n$. So a second factorization
\[
A(z) = (1+q_1JQ_1)\cdots (1+q_M JQ_M)
\]
would have to satisfy $Q_M=P_N$. Thus if we multiply by the inverse $1-p_NJP_N$ of the last factor of the first factorization, we obtain
\[
(1+p_1JP_1) \cdots (1+p_{N-1}JP_{N-1}) = (1+q_1JQ_1) \cdots (1+(q_M-p_N)JP_N) .
\]
If we had $q_M\not= p_N$, then the same argument would now show that $P_{N-1}=P_N$, which contradicts our assumptions. So we also have $q_M=p_N$.
In other words, the rightmost factors agree, and then we can of course continue in this style to deduce that the whole factorizations must be identical.
This concludes the proof of Theorem \ref{T1.1}.

A key step of this proof (and also the one of Theorem \ref{T1.2}, to be discussed in the next section) was to bring
$A_n$ or, equivalently, $JP_1\cdots JP_N$, to Jordan normal form. Therefore the following observation is perhaps of some interest even though we will
not need it here.
\begin{Proposition}
\label{P2.1}
Let $P_1, \ldots , P_N$, $N\ge 1$, be projections and suppose that $P_j\not= P_{j+1}$, $j=1,2, \ldots , N-1$. Then $JP_1 \cdots JP_N$ is diagonalizable if and only
if $P_1\not= P_N$.
\end{Proposition}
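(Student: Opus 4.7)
The plan is to compute $M := JP_1\cdots JP_N$ explicitly by exploiting that each factor has rank one. Writing $P_j = e_{\alpha_j}e_{\alpha_j}^*$ with $e_\alpha=(\cos\alpha,\sin\alpha)^t$, one has the elementary identities $Je_\alpha = e_{\alpha+\pi/2}$ and $e_\alpha^* e_\beta = \cos(\alpha-\beta)$, hence
\[
JP_{\alpha_j} = e_{\alpha_j+\pi/2}\,e_{\alpha_j}^*, \qquad e_{\alpha_j}^* e_{\alpha_{j+1}+\pi/2} = \sin(\alpha_j-\alpha_{j+1}).
\]
A telescoping induction on $N$ then yields
\[
M = c\, e_{\alpha_1+\pi/2}\, e_{\alpha_N}^*, \qquad c := \prod_{j=1}^{N-1} \sin(\alpha_j-\alpha_{j+1}),
\]
and the standing hypothesis $P_j\neq P_{j+1}$, i.e., $\alpha_j\not\equiv\alpha_{j+1}\pmod{\pi}$, forces $c\neq 0$. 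In particular $M$ has rank exactly one (this also reproves Lemma \ref{L2.1}).

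Second, I would invoke the general fact that a nonzero rank-one matrix on $\C^2$ is diagonalizable if and only if its trace is nonzero: since $\det M = 0$, its characteristic polynomial is $\lambda(\lambda-\tr M)$, so either $\tr M\neq 0$ and there are two distinct eigenvalues, or $\tr M = 0$ and $M$ is a nonzero nilpotent, i.e., a single Jordan block.

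Combining the two steps collapses the proposition to a one-line computation: using $e_{\alpha_N}^* e_{\alpha_1+\pi/2} = \sin(\alpha_N-\alpha_1)$,
\[
\tr M = c\,\sin(\alpha_N-\alpha_1),
\]
which, given that $c\neq 0$, is nonzero precisely when $\alpha_1\not\equiv\alpha_N\pmod\pi$, i.e., when $P_1\neq P_N$. I do not anticipate a serious obstacle; the whole argument is bookkeeping around the rank-one telescoping identity. The only small point to check is the degenerate case $N=1$, where the empty product gives $c=1$ and $\tr M = \sin 0 = 0$, correctly predicting non-diagonalizability of $JP_1$ in accordance with the trivial equality $P_1=P_N$.
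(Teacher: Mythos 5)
Your proof is correct. Both your argument and the paper's share the same reduction: since $\det M = 0$ and $M \neq 0$, the characteristic polynomial is $\lambda(\lambda - \tr M)$, so $M$ is diagonalizable iff $\tr M \neq 0$. Where you differ is in how the trace is evaluated. The paper computes it abstractly by picking an orthonormal basis $\{e, f\}$ adapted to $P_N$ (with $P_N e = 0$, $f = Je$), so that $\tr M = \langle f, Mf\rangle = -\langle e, JMf\rangle$, and then observes that $JMf$ is a nonzero vector in $R(P_1) = N(P_1)^\perp$, which is orthogonal to $e \in N(P_N)$ precisely when $P_1 = P_N$; the nonvanishing of $Mf$ is borrowed from the proof of Lemma \ref{L2.1}. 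You instead write each factor as a rank-one outer product $JP_{\alpha_j} = e_{\alpha_j + \pi/2} e_{\alpha_j}^*$ and telescope, producing the closed form $M = c\, e_{\alpha_1 + \pi/2} e_{\alpha_N}^*$ and $\tr M = c\sin(\alpha_N - \alpha_1)$ with $c = \prod \sin(\alpha_j - \alpha_{j+1})$. Your route is more computational but yields a self-contained argument: it gives the trace formula explicitly, reproves Lemma \ref{L2.1} as a by-product (since $c \neq 0$), and even records the exact scalar. The paper's route is shorter and coordinate-free but leans on the range/kernel bookkeeping from the preceding lemma. Both are complete and correct.
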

\begin{proof}
Call this matrix $B$. Recall that $\det B=0$, $B\not=0$, so the eigenvalues of $B$ are $0$, $\tr B$, and thus $B$ is diagonalizable if and only if $\tr B\not= 0$.
We compute the trace using an orthonormal basis $\{ e, f\}$ satisfying $P_Ne=0$, $f=Je$. This gives
\[
\tr B = \s{f}{Bf} =\s{Je}{Bf} = -\s{e}{JBf} .
\]
As in the proof of Lemma \ref{L2.1}, we have $JBf\not= 0$. Since $J^2=-1$, we see that $JBf\in R(P_1)=N(P_1)^{\perp}$. This is orthogonal to $e\in N(P_N)$
if and only if $P_1=P_N$.
\end{proof}
\section{Proof of Theorem \ref{T1.2}}
Let's start by reviewing the one basic fact about Herglotz functions that we will need here. Namely, if $F\in\mathcal F$, $F\not\equiv\infty$,
then $b=\lim_{y\to\infty} -iF(iy)/y$ exists and $b\ge 0$. This is well known and also an immediate consequence of the Herglotz representation formula
\[
F(z) = a + \int_{\R_{\infty}} \frac{1+tz}{t-z}\, d\nu(t) .
\]
This property may also be applied to $-1/F$, which is another Herglotz function, so it is also true that if $F\in\mathcal F$, $F\not\equiv 0$,
then either $y|F(iy)|\to\infty$ or else $c=\lim_{y\to\infty} -iy\,F(iy)$ exists and $c>0$.

Or, to summarize this somewhat imprecisely but more intuitively, Herglotz functions cannot grow faster than $bz$ and they cannot decay more rapidly than $-c/z$
for large $z$. This will become important when we analyze later how they could alter the asymptotics of the polynomial entries of an $A\in\mathcal P$.

Let's then start the proof of Theorem \ref{T1.2} with some observations on more specialized situations.
\begin{Lemma}
\label{L3.1}
Suppose that $A(z)=T_1(z)T^{-1}_2(z)$, with $T_j\in\mathcal{TM}$, and $A\cdot F = G$ for some $F,G\in\mathcal F$.
Then $A\in\mathcal{TM}$ or $A^{-1}\in\mathcal{TM}$.
\end{Lemma}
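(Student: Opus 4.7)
The plan is to reduce the lemma to an inverse spectral compatibility statement by introducing the auxiliary Herglotz function $H := T_2^{-1}\cdot F$. Since $T_2\in\mathcal{TM}$, the function $H$ is Herglotz, and the hypothesis $A\cdot F = G$ rewrites as $G = T_1\cdot H$. So both $F = T_2\cdot H$ and $G = T_1\cdot H$ hold, with $F, G, H\in\mathcal F$. The aim is to translate these identities into a condition on the canonical systems underlying $T_1$ and $T_2$, from which the transfer matrix property of $A$ or $A^{-1}$ will drop out.

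Next I would write $T_j$ as the transfer matrix $T(L_j, z; H_j)$ of a canonical system $H_j$ on $[0, L_j]$ (possible since $T_j\in\mathcal{TM}$). The key technical step is the following \emph{Weyl disk identification}: if $G = T_1\cdot H$ with $G, H\in\mathcal F$, then the unique canonical system $K$ on $[0,\infty)$ with $m_+(K) = H$ satisfies $K|_{[0, L_1]} = H_1$. I would prove this by concatenating $H_1$ on $[0, L_1]$ with a shift of the canonical system $K_G$ satisfying $m_+(K_G) = G$ to form a candidate $\tilde K$ on $[0, \infty)$; the shift formula then gives $m_+(\tilde K(\cdot + L_1)) = T_1\cdot m_+(\tilde K)$, which together with $m_+(\tilde K(\cdot + L_1)) = m_+(K_G) = G$ yields $m_+(\tilde K) = T_1^{-1}\cdot G = H$, and uniqueness in the inverse spectral theorem then gives $\tilde K = K$.

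Applying this identification to both $G = T_1\cdot H$ and $F = T_2\cdot H$ forces the single system $K$ to satisfy simultaneously $K|_{[0, L_1]} = H_1$ and $K|_{[0, L_2]} = H_2$. Assuming $L_1 \geq L_2$ without loss of generality, this gives $H_1|_{[0, L_2]} = H_2$, so $T_2$ is also the transfer matrix of $H_1$ across $[0, L_2]$, and the cocycle identity for transfer matrices exhibits $A = T_1 T_2^{-1}$ as the transfer matrix of $H_1$ from position $L_2$ to position $L_1$, hence an element of $\mathcal{TM}$. The case $L_1 < L_2$ is symmetric and yields $A^{-1}\in\mathcal{TM}$; the normalization $A(0) = 1$ (or $A^{-1}(0) = 1$) is automatic from $T_j(0) = 1$.

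The main obstacle I anticipate is the Weyl disk identification step: although it follows cleanly from the shift formula and the uniqueness direction of inverse spectral theory, some bookkeeping care is needed in juggling the pointwise Möbius action of $T_1(z)$ with the global statement about canonical systems. Notably, the argument nowhere uses polynomiality of $A$ or $T_j$, consistent with the general formulation of the lemma.
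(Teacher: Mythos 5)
Your proposal is correct and follows essentially the same route as the paper: both introduce the auxiliary Herglotz function $T_2^{-1}\cdot F = T_1^{-1}\cdot G$, invoke the characterization of $\{T^{-1}\cdot M : M\in\mathcal F\}$ as the $m_+$-functions of canonical systems agreeing with the underlying coefficient function on $[0,L]$ (Theorem 6.1 of the book), and conclude that $H_1$ and $H_2$ coincide on the shorter interval, so that one of $T_1 T_2^{-1}$, $T_2 T_1^{-1}$ is a transfer matrix. The only cosmetic difference is that you re-derive the Weyl disk identification from the shift formula and inverse spectral uniqueness rather than citing it directly.
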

\begin{proof}
We have $T^{-1}_2\cdot F = T^{-1}_1\cdot G$. As we discussed earlier, the Herglotz functions $\{ T(L,z;H)^{-1}\cdot M: M\in\mathcal F\}$ are exactly the
half line $m$ functions of those canonical systems whose coefficient function agrees with $H(x)$ on $0\le x\le L$; see \cite[Theorem 6.1]{Rembook} again.
Thus in our situation, if we write $T_j(z)=T(L_j,z; H_j)$ and $L_2\ge L_1$, say, then $H_1(x)=H_2(x)$ on $0\le x\le L_1$. This means that
$T_2=T_3T_1$, and here $T_3(z)=T(L_2,L_1,z; H_2)$ also lies in $\mathcal{TM}$. In the other case, when $L_1>L_2$, we similarly obtain
$T_1=T_4T_2$. In either case, there is a cancellation in the product defining $A(z)$, and the statement follows.
\end{proof}
\begin{Lemma}
\label{L3.2}
(a) Let $A\in\mathcal P$, and suppose that $A\cdot x\in\mathcal F$ for some $x\in\R_{\infty}$. Then $A$ is of the form $A(z)=T^{-1}(z)(1+p(z)JP)$,
with $T\in\mathcal{TM}\cap\mathcal P$ and $Pv=0$ for the vector $v$ representing $x$.

(b) If, in addition, $D(A)\not=\emptyset$, then $A\in\mathcal{TM}$ or $A^{-1}\in\mathcal{TM}$, or else $A=1+pJP$ with $\deg p\ge 2$ and $D(A)=\{ (x,-x) \}$.
\end{Lemma}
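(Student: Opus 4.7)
Conjugating $A\mapsto SAS^{-1}$ by a suitable $S\in\SL(2,\R)$ sends the vector $v$ representing $x$ to $e_2$, so I assume throughout that $v=e_2$, $x=0$, and $P_v=P_1$. For (a), the hypothesis then reads that $G(z):=A_{12}(z)/A_{22}(z)$ is a rational Herglotz function with $G(0)=0$, and my plan is to produce a polynomial transfer matrix $T\in\mathcal{TM}\cap\mathcal P$ realizing $G$ as $T^{-1}\cdot 0$. The construction of $T$ is an inverse-spectral step: a rational Herglotz function vanishing at $0$ is the $m$-function of a finite canonical system built from singular intervals, whose transfer matrix is precisely a product of the form (1.1); the factors of $T$ can be read off from the $J$-fraction / continued-fraction expansion of $G$ at infinity and at its poles. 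Granted $T$, the matrix $TA\in\mathcal P$ satisfies $(TA)\cdot 0=T\cdot G=0$ identically in $z$, so $(TA)(z)e_2\parallel e_2$ and hence $(TA)_{12}\equiv 0$. The relation $\det(TA)\equiv 1$ together with $(TA)(0)=1$ then forces $(TA)_{11}\equiv(TA)_{22}\equiv 1$ (two polynomials normalized to $1$ at $0$ whose product is identically $1$ must both be constant), so $TA=1+q(z)JP_v$ for a polynomial $q$ with $q(0)=0$, and $A=T^{-1}(1+qJP_v)$ follows.

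For (b), apply (a) and set $B=1+pJP_v$. If $p\equiv 0$, then $A^{-1}=T\in\mathcal{TM}$. If $\deg p=1$, write $p=Lz$: when $L<0$, $B^{-1}\in\mathcal{TM}$ and $A^{-1}=B^{-1}T\in\mathcal{TM}$; when $L>0$, I would apply Lemma \ref{L3.1} not to $A$ but to its $I$-conjugate
\[
IAI=(IT^{-1}I)\bigl(1-LzJ(IP_vI)\bigr).
\]
Here $IT^{-1}I\in\mathcal{TM}$ (by Theorem \ref{T1.1} its factorization is obtained from that of $T$ by $I$-conjugating each projection and keeping the same positive coefficients), and $1-LzJ(IP_vI)$ is the inverse of an element of $\mathcal{TM}$, so $IAI=T_1T_2^{-1}$ with $T_j\in\mathcal{TM}$ as required by Lemma \ref{L3.1}. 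The hypothesis $D(A)\neq\emptyset$ supplies $F_-\in\mathcal F$ with $IAI\cdot F_-\in\mathcal F$, so Lemma \ref{L3.1} yields $IAI\in\mathcal{TM}$ or $(IAI)^{-1}\in\mathcal{TM}$; unraveling the Theorem \ref{T1.1} factorizations, these correspond respectively to $A^{-1}\in\mathcal{TM}$ (cancellation of a factor $1+LzJP_v$ sitting at the start of $T$) and $A\in\mathcal{TM}$ (which forces $T=1$).

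When $\deg p\ge 2$, the factor $B$ is, by Theorem \ref{T1.1}, neither in $\mathcal{TM}$ nor the inverse of one, so the Lemma \ref{L3.1} mechanism does not apply. I expect instead to use the Herglotz growth bounds recalled at the start of the section to show that $T$ must be trivial and that $D(A)$ collapses to $\{(x,-x)\}$: the degree-$\ge 2$ polynomial $p$ causes $(1+pJP_v)\cdot F_+$ to fall off faster than $1/z$ along $iy\to i\infty$ for any Herglotz $F_+$ distinct from $x$, and the resulting output cannot lie in $T(z)\overline{\C^+}$ (the set that $T^{-1}$ can map back into $\overline{\C^+}$) unless $T=1$; an analogous computation via $IAI$ handles the $F_-$ side. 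The two main obstacles I foresee are the inverse-spectral realization in (a) and this asymptotic argument in (b).
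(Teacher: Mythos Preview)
Your argument for (a) is essentially the paper's: realize the rational Herglotz function $A\cdot x$ as $T^{-1}\cdot x$ for a polynomial transfer matrix $T$, then observe that $TA$ fixes the line through $v$ and hence has the triangular form $1+qJP_v$. The paper normalizes to $v=e_1$ rather than $e_2$ and quotes the realization step from \cite[Lemma 5.9]{Rembook} rather than sketching a continued-fraction construction, but the substance is identical. For (b) with $\deg p\le 1$ your case split is a minor variant of the paper's uniform treatment via $IAI=T_1(1-pJQ)$ and Lemma \ref{L3.1}, and it works.

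The gap is in the $\deg p\ge 2$ case. You are right that $(1+pJP_v)\cdot F_+$ decays faster than $1/z$ whenever $F_+\not\equiv x$, but the conclusion you draw from this does not follow. The equation $A\cdot F_+=G_+$ reads $(1+pJP_v)\cdot F_+ = T\cdot G_+$, and since it is $T^{-1}$ (not $T$) that maps $\overline{\C^+}$ into itself for $z\in\C^+$, the quantity $T\cdot G_+$ carries no Herglotz-type growth constraint; the phrase ``cannot lie in $T(z)\overline{\C^+}$ unless $T=1$'' has no force, because $T(z)\overline{\C^+}\supseteq\overline{\C^+}$ for every $z\in\C^+$. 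The paper avoids this by starting on the \emph{other} half line: writing $IAI=T_1(1-pJQ)$ with $T_1=IT^{-1}I\in\mathcal{TM}$, the relation $IAI\cdot F_-=G_-$ becomes $(1-pJQ)\cdot F_-=T_1^{-1}\cdot G_-$, and now the right-hand side \emph{is} a Herglotz function, so the growth bound forces $F_-\equiv y$ directly. A second point: your expected conclusion ``$T=1$'' is too strong. What one actually gets from $T_1^{-1}\cdot G_-=y$, via the Weyl-disk description of such Herglotz functions, is only that $T_1=1+LzJQ$ with the \emph{same} projection $Q$; this factor then merges with $1-pJQ$ to give $IAI=1+\tilde q\,JQ$, and only after this reduction (so that no $T^{-1}$ remains) does the growth argument on $A\cdot F_+$ apply to pin down $F_+\equiv x$.
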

We can take the vector $v$ from part (a) as $v=(x,1)^t$ if $x\not=\infty$ and $v=(1,0)^t$ if $x=\infty$.
\begin{proof}
(a) The function $A(z)\cdot x$ is rational, and the rational Herglotz functions are exactly those of the form $T^{-1}(z)\cdot y$, with
$T\in\mathcal{TM}\cap\mathcal P$ and $y\in\R_{\infty}$. See \cite[Lemma 5.9]{Rembook}. So if $v,w\in\R^2$ denote vectors representing $x$ and $y$,
respectively, then $A(z)v=\lambda(z)T^{-1}(z)w$ or $TAv=\lambda w$. Since $A(0)=T(0)=1$, this is only possible if $v=w$.
So we now have a matrix function $B=TA\in\mathcal P$ satisfying $B(z)v=\lambda(z) v$. If we work with $C(z)=RB(z)R^t$ instead, for a rotation $R\in\SO (2)$ that maps
$Rv=e_1$, then we will have $C(z)e_1=\lambda(z) e_1$. In other words $C_{21}(z)=0$. Clearly, the only such $C\in\mathcal P$ are
\[
C(z)= \begin{pmatrix} 1 & -p(z) \\ 0 & 1 \end{pmatrix} = 1 + p(z)JP_2 .
\]
We can now transform back. Since $J$ is a rotation itself and thus commutes with $R$, we have
$R^tJP_2R= JP$, and here $P=R^tP_2R$ still is a projection, and $Pv=PR^te_1=0$. This confirms that $A$ is of the asserted form.

(b) It is easy to verify, using \eqref{hp} or the reformulation mentioned above, that if $T\in\mathcal{TM}$, then $T_1=IT^{-1}I\in\mathcal{TM}$ also.
See \cite[Lemma 4.14]{Rembook} again.

Thus part (a) shows that $IAI$ is of the form $IAI=T_1(1-pJQ)$, with $T_1\in\mathcal{TM}\cap\mathcal P$.
If $\deg p\le 1$ here, then this matrix function is of the form $IAI=T_1T_2^{\pm 1}$. However, Lemma \ref{L3.1} now shows that the minus sign
is only possible if there is a cancellation or one of the two matrix functions is the identity matrix.
In every case, it turns out that $IAI$ or its inverse lies in $\mathcal{TM}$ and thus the same is true of $A$ itself.

It remains to discuss the case $\deg p\ge 2$.
By assumption, there are $F,G\in\mathcal F$ such that $IAI\cdot F=G$. We can write this in the form
$K = (1-pJQ)\cdot F$, and here $K=T^{-1}_1\cdot G$ is another Herglotz function. We again rotate such that $R^tQR=P_2$. Then $R^t\cdot K =
(1-pJP_2)\cdot R^t F$, and if $M=R^t\cdot F\not\equiv\infty$, then this equals $M(z)+p(z)$. Since $\deg p\ge 2$ now, this contradicts the asymptotic
behavior of Herglotz functions that was discussed at the beginning of this section. We conclude that $F\equiv y=R\cdot\infty$, and here $y\in\R_{\infty}$
can also be characterized as the unique number for which the corresponding vector $v\in\R^2$ satisfies $Qv=0$. We have also shown that $F\equiv y$
is the only Herglotz function to which $IAI$ can be applied.

Moreover, $T_1^{-1}\cdot G=y$ also, and now we again make use of the description of the Herglotz functions of this type from \cite[Theorem 6.1]{Rembook}.
Since $y$ is the $m$ function of the constant coefficient function $H(x)=Q$, we see that $T_1=1+LzJQ$ is the transfer matrix across an interval of this coefficient
function. This shows that
\[
IAI=T_1(1-pJQ)=1+(Lz-p)JQ=1+qJQ
\]
and thus $A=1-qJP$ as well. We find ourselves in the last case from the statement of Lemma \ref{L3.2}(b), and we can finally analyze $A$ itself in the same way
to confirm that $D(A)$ is as described.
\end{proof}

We now come to the main step. Let us introduce two more pieces of notation:
\begin{gather*}
\mathcal F_1 =\mathcal F\setminus \{ F\equiv x: x\in\R_{\infty}\} , \\
\mathcal T_1 = \{ 1+LzJP : L>0, P \textrm{ projection}\} .
\end{gather*}
So $\mathcal F_1$ is the collection of genuine Herglotz functions $F:\C^+\to\C^+$, with those functions that are identically equal to an extended
real number $x\in\R_{\infty}$ excluded, and $\mathcal T_1\subseteq\mathcal{TM}\cap\mathcal P$ is the collection of degree one transfer matrices,
across a single singular interval.
\begin{Lemma}
\label{L3.3}
Let $A\in\mathcal P$, with $n=\deg A\ge 2$. Suppose that $(F_+,F_-)\in D(A)$, with $F_{\pm}\in\mathcal F_1$ and $G_+=A\cdot F_+, G_-=IAI\cdot F_-\in\mathcal F_1$ also.

Then either $A=T_1BT_2$, with $T_j\in\mathcal T_1$, $B\in\mathcal P$, $\deg B=n-2$, and $F_+=T_2^{-1}\cdot K_+$ for some $K_+\in\mathcal F$,
or else $IAI$ is of this form and the corresponding statements hold for this matrix function and $F_-$.
\end{Lemma}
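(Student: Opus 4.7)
My plan is to apply the Theorem~\ref{T1.1} factorization $A=(1+p_1JP_1)(1+p_2JP_2)\cdots(1+p_NJP_N)$ and to show that, after possibly replacing $A$ by $IAI$, both outer polynomials $p_1,p_N$ are linear with positive leading coefficients. Granted this, setting $T_1=1+p_1JP_1$, $T_2=1+p_NJP_N\in\mathcal T_1$ and $B=(1+p_2JP_2)\cdots(1+p_{N-1}JP_{N-1})$ immediately gives $A=T_1BT_2$ with $\deg B=n-2$, and the condition $F_+=T_2^{-1}\cdot K_+$ for some $K_+\in\mathcal F$ is then equivalent to $T_2\cdot F_+$ being Herglotz, which (as indicated below) should fall out of the same asymptotic analysis. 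One may assume $N\ge 2$: if $N=1$, then $A=1+p_1JP_1$ with $\deg p_1=n\ge 2$, and by Lemma~\ref{L3.2}(b) this would force $D(A)=\{(x,-x)\}$ with $x\in\mathbb R_\infty$, contradicting $F_\pm\in\mathcal F_1$.

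The main analytic step is an asymptotic analysis at $z=iy$, $y\to\infty$. Conjugate by a rotation $R\in\SO(2)$ so that $P_N=\bigl(\begin{smallmatrix}1&0\\0&0\end{smallmatrix}\bigr)$ in the new basis; the rightmost factor then becomes the shear $\bigl(\begin{smallmatrix}1&0\\p_N&1\end{smallmatrix}\bigr)$, and writing $\tilde A=R^tAR=\bigl(\begin{smallmatrix}\tilde a&\tilde b\\\tilde c&\tilde d\end{smallmatrix}\bigr)$, the factorization structure gives $\deg\tilde b,\deg\tilde d\le n-\deg p_N$. Substituting the Herglotz asymptotics $\tilde F_+(iy)=i\beta y+o(y)$ (with $\beta\ge 0$, plus the dual lower bound ruling out faster-than-$1/y$ decay recalled at the start of this section) into $\tilde G_+=(\tilde a\tilde F_++\tilde b)/(\tilde c\tilde F_++\tilde d)$ and comparing leading orders, I expect that if $\deg p_N\ge 2$, the ratio of leading terms forces $\tilde G_+(iy)$ either to grow super-linearly or to acquire a leading imaginary part of the wrong sign, contradicting $\tilde G_+\in\mathcal F_1$. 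Hence $\deg p_N=1$. Running the analogous argument for $F_+=A^{-1}\cdot G_+$, whose Theorem~\ref{T1.1} rightmost factor is $1-p_1JP_1$, then forces $\deg p_1=1$ as well.

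With $p_1(z)=L_1z$ and $p_N(z)=L_Nz$, a second-order $1/y$ expansion should pin down the signs. The condition $L_N>0$ (that is, $T_2\in\mathcal T_1$) is precisely what is needed for $T_2\cdot F_+$ to be Herglotz: in the $P_N$-basis it reduces, via the identity $-1/(T_2\cdot\tilde F_+)=-1/\tilde F_+-L_Nz$, to the Herglotz function $-1/\tilde F_+$ having linear-growth coefficient at least $L_N$, and I expect the refined Herglotz asymptotics of $\tilde G_+$ to enforce this. The parallel analysis on the $F_-$ side via $IAI$, whose canonical factorization has polynomials $-p_j$, then ensures that $L_1$ and $L_N$ are of consistent sign: both positive yields the first alternative of the lemma (with $K_+=T_2\cdot F_+$), and both negative yields the second alternative after swapping $A$ for $IAI$. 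The main obstacle I anticipate is the careful bookkeeping across the asymptotic regimes of $\tilde F_+(iy)$ (linear growth, convergence to a finite real limit, or $1/y$-decay) and in particular the degenerate case $\tilde c_n=0$, in which the generic leading-order comparison breaks down and the argument must descend one order.
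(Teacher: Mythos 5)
Your overall strategy—factor $A$ via Theorem~\ref{T1.1}, then argue by asymptotics at $z=iy$ that after possibly replacing $A$ by $IAI$ the outermost factors are linear with the right sign—is indeed the same strategy as the paper's. But there is a genuine gap in the execution, and it traces back to the normalization you chose.

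You conjugate by a rotation $R\in\SO(2)$ chosen so that $R^tP_NR=\bigl(\begin{smallmatrix}1&0\\0&0\end{smallmatrix}\bigr)$, which (correctly) forces the \emph{second column} of $\tilde A$ to have degree $\le n-\deg p_N$. But this leaves the first column uncontrolled: writing $\tilde A_n\propto J\tilde P_1\cdots J\tilde P_N$, one sees that the leading matrix has the shape $\bigl(\begin{smallmatrix}\ast&0\\\ast&0\end{smallmatrix}\bigr)$, and generically \emph{both} entries of the first column are nonzero, i.e.\ both $\tilde a$ and $\tilde c$ have degree $n$. In that case, if $\tilde F_+(iy)\sim i\beta y$ with $\beta>0$, the ratio $\tilde G_+=(\tilde a\tilde F_++\tilde b)/(\tilde c\tilde F_++\tilde d)$ tends to the \emph{real, finite} limit $\tilde a_n/\tilde c_n$, which is perfectly consistent with Herglotzness. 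No contradiction appears at leading order regardless of $\deg p_N$; a concrete computation with $A=(1+LzJP_1)(1+z^2JP_2)$ and $\tilde F_+=\beta z$ gives $\Im\tilde G_+(iy)\sim -1/(Lc^2 y)$, which has the wrong sign only if $L>0$—so one must descend to a sub-leading $1/y$ scale and then track signs, carrying along $F_-$ and the $A^{-1}$ analysis simultaneously. You flagged $\tilde c_n=0$ as the delicate case, but you have it backwards: $\tilde c_n=0$ is the case where the leading order already gives a super-linear (or sign-wrong) contradiction; $\tilde c_n\ne 0$ is the truly hard regime that your outline does not resolve.

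The paper sidesteps this by conjugating with a general $S\in\SL(2,\R)$ that brings the top coefficient $A_n$ to \emph{Jordan normal form}. This normalizes the kernel \emph{and} the range of $A_n$ simultaneously, which in factorization language pins down both $\hat P_N=\bigl(\begin{smallmatrix}1&0\\0&0\end{smallmatrix}\bigr)$ and $\hat P_1=\bigl(\begin{smallmatrix}0&0\\0&1\end{smallmatrix}\bigr)$ at once, so that only the $(1,1)$ entry has degree $n$. With that cleaner degree profile the leading-order asymptotics of $SAS^{-1}\cdot K$ are already decisive (together with the $F_-$ side to rule out the borderline case $a/c>0$, and the $A^{-1}$ side to control the $(2,1)$ degree). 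A rotation alone cannot achieve this extra normalization, since $A_n$ is generally not diagonalizable by an orthogonal similarity. To make your route rigorous you would need either to switch to the $\SL(2,\R)$ Jordan-form conjugation, or to carry out the full sub-leading $1/y$ expansion in the $\tilde c_n\ne 0$ case (in both asymptotic regimes for $\tilde F_+$, and jointly for $F_+$, $F_-$, and $A^{-1}$), neither of which the proposal actually does. The non-diagonalizable case of $A_n$, which the paper handles separately, is also not addressed.
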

Before we prove this, let's discuss how Lemma \ref{L3.3} can be used to establish Theorem \ref{T1.2}. Let $A\in\mathcal P$, with $D(A)\not=\emptyset$, be given.
We then have four Herglotz functions $F_{\pm}$, $G_{\pm}$ such that $A\cdot F_+=G_+$, $IAI\cdot F_-=G_-$. If at least one of the four functions is not in
$\mathcal F_1$ here, then Lemma \ref{L3.2}, applied to $A$ or to $A^{-1}$, will give the desired conclusions.

So we can focus on the case $F_{\pm},G_{\pm}\in\mathcal F_1$, and now Lemma \ref{L3.3} applies. Let's say we are in the first case, so $A=T_1BT_2$.
Since $F_+=T_2^{-1}\cdot K_+$, we have $T_1B\cdot K_+\in\mathcal F_1$. Moreover, $I(T_1B)I\cdot (IT_2I\cdot F_-)\in\mathcal F_1$ as well,
and here $IT_2I=T_3^{-1}$ is the inverse of a $T_3\in\mathcal T_1$, so in particular $IT_2I\cdot F_-=T^{-1}_3\cdot F_-\in\mathcal F$.

The upshot of all this is that we may remove the last factor $T_2$ of the factorization of $A$, and the reduced matrix still has a non-empty domain
because, as we just saw, $(K_+,K_-)\in D(T_1B)$, with $K_+=T_2\cdot F_+$, $K_-=T^{-1}_3\cdot F_-$. The images $(G_+,G_-)$ haven't changed.

It could happen here that $K_+=x\in\R_{\infty}$, so $K_+\notin\mathcal F_1$, but this only opens up a short-cut to our eventual goal.
(We do always have $K_-\in\mathcal F_1$, as we can confirm by comparing with Lemma \ref{L3.2} or by using the description of the Herglotz functions
$\{ T^{-1}_3\cdot N: N\in\mathcal F\}$ again.)
We may now refer to Lemma \ref{L3.2}(b), and only the case $T_1B\in\mathcal{TM}$ is consistent with what we already know about the factorization
of this matrix function. In particular, observe that if we had $T_1B=1+pJP$ with $\deg p\ge 2$, then factoring out a $T_1\in\mathcal T_1$
would not reduce the degree, so this is impossible here. We conclude that $A=T_1BT_2\in\mathcal{TM}$, as claimed.

In the other case, when $K_{\pm}\in\mathcal F_1$, we can apply Lemma \ref{L3.3} again,
this time to $T_1B=T_1CT_4$, to remove one more factor $T_4\in\mathcal T_1$ and reducing the
degree at the same time.

We continue in this style until we have completely factored $A$ as a product of matrices from $\mathcal T_1$. In particular, $A\in\mathcal{TM}$, as desired.
It is of course not possible here that we suddenly find ourselves in the other case of Lemma \ref{L3.3} in the middle of this process.
More explicitly, we cannot have, say, $I(T_1B)I=T_5DT_6$ rather than $T_1B=T_1CT_4$ because $IT_5I\notin\mathcal T_1$ if $T_5\in\mathcal T_1$, so this
would contradict the uniqueness of such factorizations.

The other case, when $IAI=T_1BT_2$ initially, is completely analogous. This time, the method will show that $A^{-1}\in\mathcal{TM}$.
\begin{proof}[Proof of Lemma \ref{L3.3}]
We again bring the highest order coefficient $A_n$ of $A(z)=1+\ldots + z^n A_n$ to Jordan normal form, so work with $SA(z)S^{-1}$, for suitable
$S\in\SL (2,\R)$. Let's first assume that $A_n$ is diagonalizable. Then again, as in \eqref{2.1},
\begin{equation}
\label{3.2}
SA(z)S^{-1} = \begin{pmatrix} az^n & bz^s \\ cz^t & dz^k \end{pmatrix} + \textrm{ lower order terms} .
\end{equation}
Here $a,b,c,d\not =0$, $ad=bc$, $n+k=s+t$, $s,t\le n-1$, $k\le n-2$. We can extract additional information on the degrees from
the extra assumptions on the existence of non-trivial Herglotz functions in the domain. I claim that $s=n-1$. If, on the contrary, we had $s\le n-2$, then
\begin{equation}
\label{3.1}
SA(z)S^{-1}\cdot (SF_+) = \frac{ az^n K(z) + O(z^{n-1}K(z))+O(z^{n-2})}{cz^t K(z) + O(z^{t-1}K(z))+O(z^{t-2})} ,
\end{equation}
and here we have written $K=S\cdot F_+$.
This follows because, by assumption, $s\le n-2$, and thus also $k=t+s-n\le t-2$.
Note also that since $S\in\SL (2,\R)$ acts as an automorphism of $\C^+$, we still have $K\in\mathcal F_1$. Furthermore,
$SA\cdot F_+\in\mathcal F_1$, for the same reason.

We now make use of the fact, reviewed at the beginning of this section, that the Herglotz function $K\not\equiv 0$ can not decay faster than $c/z$. Hence
the right-hand side of \eqref{3.1} has the asymptotic behavior $\simeq (a/c)z^{n-t}$. This could be compatible with this function being a Herglotz function
if $t=n-1$, $a/c>0$. However, we can then run the same analysis for $IAI\cdot F_-$, and this time we obtain the asymptotics $(a/(-c))z$
because $ISAS^{-1}I=(ISI) IAI (ISI)^{-1}$ is the same matrix as $SAS^{-1}$, but with the signs changed in the off-diagonal elements.
We have run into a contradiction after all. We must admit that $s=n-1$. Finally, a similar argument may be applied to $A^{-1}$, and this will show that $t=n-1$ also.

So we now have the following more precise version of \eqref{3.2}:
\begin{equation}
\label{3.3}
SA(z)S^{-1} = \begin{pmatrix} az^n & bz^{n-1} \\ cz^{n-1} & dz^{n-2} \end{pmatrix} + \textrm{ lower order terms} .
\end{equation}
Moreover, as above, by making the appropriate
choice between $A$ and $IAI$, we may also assume that $a/c<0$. Let's assume, for convenience, that this happens for $A$ itself.

We can now repeat the asymptotic analysis of $SAS^{-1}\cdot K$ from above. The contradictory asymptotics $(a/c)z+o(z)$ can only be avoided if the terms
$az^nK(z)$ and $cz^{n-1}K(z)$ do not dominate all other contributions from the numerator and denominator, respectively.
This in turn is only possible if $K(z)=-\gamma/z+o(1/z)$, with $\gamma>0$. In this situation, \eqref{3.1} becomes
\[
SA(z)S^{-1}\cdot (SF_+) = \frac{ (-\gamma a+b)z^{n-1} +o(z^{n-1})}{(-\gamma c+d)z^{n-2} +o(z^{n-2})} .
\]
Since $ad=bc$, we see from this that we must insist that $\gamma=b/a=d/c$, or else we would still obtain the asymptotics $(a/c)z$. In particular, $b/a>0$.

Recall that if $K=m_+(z;H)$ is viewed as the half line $m$ function of a canonical system, then such asymptotics $K(z)\simeq -\gamma/z$ are equivalent
to $H(x)$ starting with a singular interval of the type $H(x)=P_1$. Put differently, we have
$K(z)=(1-(z/\gamma)JP_1)\cdot M(z)$, for some $M\in\mathcal F$. See \cite[Theorem 4.33]{Rembook}
and also the argument from the proof of Theorem 4.34 there for further details on this step.

We now compare this information with what we know about the factorization of $SAS^{-1}$ from the proof of Theorem \ref{T1.1}. Recall that \eqref{3.3}
lets us reduce the degree by pulling out a factor on the left as follows:
\[
SA(z)S^{-1} = \begin{pmatrix} 1 & \frac{a}{c}z \\ 0 & 1 \end{pmatrix} C(z) .
\]
Alternatively, we could have factored out on the right, and since both factors reduce the degree, the uniqueness of such factorizations implies that we can do \textit{both.}
So we have
\[
SA(z)S^{-1} = \begin{pmatrix} 1 & \frac{a}{c}z \\ 0 & 1 \end{pmatrix} D(z) \begin{pmatrix} 1 & 0 \\ \frac{a}{b}z & 1 \end{pmatrix} ,
\]
with $\deg D=n-2$. Notice that the other two factors are equal to $1-(a/c)zJP_2$ and $1+(a/b)zJP_1$, respectively,
and since $a/c<0$, $a/b>0$, they both lie in $\mathcal T_1$.
We just saw that $K=(1+(a/b)zJP_1)^{-1}\cdot M$, so we have established the claims of Lemma \ref{L3.3} for $SAS^{-1}$. We then obtain them for
$A$ itself by transforming back and referring to Lemma \ref{L2.2} to make sure that the basic properties of the individual factors are preserved.

Finally, the treatment of the other case, when $A_n$ is not diagonalizable, is completely analogous. As in the proof of Theorem \ref{T1.1},
I again leave the details of this case to the reader.
\end{proof}

\end{document}